\DeclareTextFontCommand{\textcyr}{\fontencoding{OT2}
    \fontfamily{wncyr}\fontseries{m}\fontshape{n}\selectfont}
\theoremstyle{plain}
\newtheorem{theorem}{Theorem}[section]
\newtheorem{lemma}[theorem]{Lemma}
\newtheorem{corollary}[theorem]{Corollary}
\theoremstyle{definition}
\newtheorem{definition}[theorem]{Definition}
\newtheorem{construction}[theorem]{Construction}
\title[Nonabelian $H^2$]
{Nonabelian $H^2$ with coefficients in a group\\ and with coefficients in a crossed module}
\author{Mikhail Borovoi}
\begin{document}


\date{\today}

\begin{abstract}
In this  note, following Dedecker and Debremaeker,
we extend the  group cohomology exact sequence for $H^1$ with coefficients in groups, using $H^2$ with coefficients in crossed modules.
\end{abstract}

\maketitle


\def\hs{\kern 0.8pt}
\def\hl{\kern -2.0pt}
\def\hlb{\kern -0.3pt }

\def\G{{\Gamma}}
\newcommand{\labelto}[1]{\xrightarrow{\makebox[1.5em]{\scriptsize ${#1}$}}}
\newcommand{\labelt}[1]{\xrightarrow{\makebox[0.8em]{\scriptsize ${#1}$}}}
\def\onto{\twoheadrightarrow}
\def\into{\hookrightarrow}
\newcommand{\isoto}{\overset{\sim}{\to}}
\def\Maps{{\mathrm{Maps}}}
\def\H{{\mathbf{H}}}
\def\vk{\varkappa}
\def\Aut{\mathrm{Aut\,}}
\def\Inn{\mathrm{Inn\,}}
\def\inn{\mathrm{inn}}
\def\Out{\mathrm{Out\,}}
\def\lra{\longrightarrow}

\def\upsig{\hs^\sigma\!}
\def\uptau{\hs^\tau\!}
\def\ups{\upsilon}
\def\sig{\sigma}
\def\upg{\hs^g\!}

\def\rel{\mathrm{\ rel\ }}

\def\im{\mathrm{im\,}}



Let $\Gamma$ be a fixed group. In this note we consider groups with $\Gamma$-action and crossed modules with $\Gamma$-action.
Following Dedecker and Debremaeker, we extend the cohomology exact sequence for $H^1$ with coefficients in groups,
using $H^2$ with coefficients in crossed modules.
The obtained exact sequence seems to be essentially equivalent to the sequence of Springer \cite[Props.\, 1.27, 1.28, 1.29]{Springer66},
but looks nicer from our point of view.
The difference of our approach to $H^2$ of crossed modules here from the approach of \cite{Breen90} and  \cite{Borovoi98} is that here,
following Dedecker and Debremaeker, we equip $H^2$ of a crossed module with a large set of {\em neutral elements}.
According to Debremaeker \cite{Debremaeker-thesis}, all our results are valid in the more general context of groups and crossed modules of a topos.
We claim no originality.

\section{Second cohomology with coefficients in a crossed module}

Let $(A\labelt{\rho} G)$ be a left crossed module with a $\G$-action (see below).
The second nonabelian cohomology with coefficients in a crossed module was considered in
\cite{Dedecker64}, \cite{Debremaeker-thesis}, \cite{Debremaeker77}, \cite{Breen90}, \cite{Borovoi98}, \cite{Noohi11}.
We define $H^2(A\to G)$ in terms of cocycles (note that in \cite{Borovoi98} this set was denoted by $H^1(\G, A\to G)$,
while in \cite{Breen90} the corresponding set in a more general setting  was denoted by $H^1(A\to G)$\,).
It is important that the set $H^2(A\to G)$ has a distinguished element
(the {\em unit element}) and a distinguished subset of {\em neutral elements}.

\begin{definition}
A (left) crossed module is a homomorphism of groups $\rho\colon A\to G$ together with a left action $G\times A\to A$ of $G$ on $A$,
denoted $(g,a)\mapsto \hs ^a\hl g$, such that
\begin{align*}
a a' a^{-1}&=\hs^{\rho(a)} a',\\
\rho(\hs^g\hlb a&)=g\cdot \rho(a)\cdot g^{-1}
\end{align*}
for all $a,a'\in A,\ g\in G$.
\end{definition}

For examples of crossed modules see e.g. \cite[Examples 3.2.2]{Borovoi98}.
Note that for any group $A$ we have crossed modules $A\to\Aut A$ and $A\to\Inn A$.

We say that our fixed  group $\Gamma$ acts on a crossed module $(A\to G)$ if $\G$ acts on $A$ and $G$ so that
\[\rho(\hs^\sigma\hl a)=\hs^\sigma(\rho(a)),\qquad ^\sigma(\hs^g\hlb a)=\hs^{^\sigma\hl g}\hlb(\hs^\sigma\hl a) \]
for all $a\in A,\ g\in G,\ \sigma\in \G$.

Let $Z^2(\G,A\to G)$ denote the set of pairs $(u,\psi)$, where
\[u\colon \G\times\G\to A \quad\text{and}\quad \psi\colon\G\to G\]
are maps satisfying the cocycle conditions of \cite[(3.3.2.1-2)]{Borovoi98}:
\begin{align*}
& u_{\sigma,\tau\upsilon}\cdot \psi_\sigma(\hs^\sigma\! u_{\tau,\upsilon})= u_{\sigma\tau,\upsilon}\cdot u_{\sigma,\tau}\\
&\psi_{\sigma\tau}=\rho(u_{\sigma,\tau})\cdot\psi_\sigma\cdot\hs^\sigma\! \psi_\tau
\end{align*}
for all $\sigma,\tau,\upsilon\in\G$.

\begin{construction}\label{cons:action}
We define a left action of the group $\Maps(\G,A)$ on $Z^2(\G,A\to G)$ as follows.
If
\[w\in\Maps(\G,A),\ (u,\psi)\in Z^2(\G, A\to G),\]
 then we set
\[w*(u,\psi)=(u',\psi'),\]
where
\begin{align*}
& u'_{\sigma,\tau}=w_{\sigma\tau}\cdot u_{\sigma,\tau}\cdot\psi_\sigma(\hs^\sigma\!w_\tau)^{-1}\cdot w_\sigma^{-1},\\
&\psi'_\sigma=\rho(w_\sigma)\cdot\psi_\sigma
\end{align*}
for all $\sigma,\tau\in\G$.
One checks that $(u',\psi')\in Z^2(\G, A\to G)$, see Appendix \ref{s:A} below.

We define a left  action of $G$ on $Z^2(\G, A\to G)$ as follows.
If
\[g\in G,\ (u,\psi)\in Z^2(\G,A\to G),\]
 then we set
\[g\star(u,\psi)=(u'',\psi''),\]
where
\begin{align*}
&u''_{\sigma,\tau}=\hs^g \hlb u_{\sigma,\tau}\\
&\psi''_\sigma=g\cdot\psi_\sigma\cdot\hs^\sigma\!\hl g^{-1}
\end{align*}
for all $\sigma,\tau\in G$.
One checks that   $(u'',\psi'')\in Z^2(\G, A\to G)$, see Appendix \ref{s:A} below.
\end{construction}

The group $G$ acts on the left on the group $\Maps(\G,A)$ by
\[g\star w=w',\quad \text{where } w'_\sigma=\hs^g\hlb w_\sigma\]
for $g\in G$, $w\in\Maps(\G,A)$, and  $\sigma\in\G$.
We consider the semi-direct product
\[C^1(\G,A\to G):=\Maps(\Gamma,A)\rtimes G.\]
Then the group $C^1(\G,A\to G)$ acts on the left on the set $Z^2(\G, A\to G)$.
Following Dedecker \cite{Dedecker64} and \cite{Dedecker69}, we define the {\em thick} cohomology set and the the {\em thin} cohomology set.

\begin{definition}
The {\em thick} cohomology set is
\[\H^2(A\to G):=Z^2(\G, A\to G)/\Maps(\G,A).\]
\end{definition}

\begin{definition}
The {\em thin} cohomology set is
\[H^2(A\to G):=Z^2(\G, A\to G)/C^1(\G,A\to G)=\H^2(A\to G)/G.\]
\end{definition}

We have a canonical surjective map
\begin{equation}\label{e:vk}
\vk\colon\H^2(A\to G)\to H^2(A\to G).
\end{equation}

\begin{definition} The {\em unit cocycle} in $Z^2(\G, A\to G)$ is the cocycle $(1_A,1_G)$.
The {\em unit classes} in $\H^2(A\to G)$ and $H^2(A\to G)$ are the images of the unit cocycle.
\end{definition}

\begin{definition} A {\em neutral cocycle} in $Z^2(\G, A\to G)$ is a cocycle  of the form $(1_A,\psi)$.
The {\em neutral classes} in $\H^2(A\to G)$ and $H^2(A\to G)$ are the images of the neutral cocycles.
\end{definition}

Thus the set $H^2(A\to G)$ contains the distinguished subset $H^2(A\to G)'$ of neutral elements.
This subset $H^2(A\to G)'$ contains the  unit element 1.

\section{Second cohomology with coefficients in a group}
Let $A$ be a $\G$-group. The $\Gamma$-action defines a homomorphism
\[f_A\colon \G\to\Aut A,\ (f_A)_\sigma(a)=\hs^\sigma\! a,\]
and thus it defines a $\G$-kernel ($\G$-band, $\G$-lien)
\[\kappa_A\colon \G\labelto{f_A} \Aut A\labelt{}\Out A,\]
where $\Out A:=\Aut A/\hs\Inn A$.
We write $H^2(A)$ for $H^2(\G, A,\kappa_A)$.
The second nonabelian cohomology set $H^2(A)$ was defined by Springer \cite{Springer66} and Giraud \cite{Giraud71},
see also \cite{Borovoi93}, \cite{FSS98}, \cite{Florence04}, and \cite{LA15}.
By definition \cite[Section 1.5]{Borovoi93}, the set of 2-cocycles $Z^2(\G,A,\kappa_A)$ is the set of pairs $(u,f)$,
where $u\in\Maps(\G\times \G)\to A$ and  $f\in \Maps(\G,\Aut A)$,
satisfying the 2-cocycle conditions
\begin{align*}
& f_{\sigma\tau}=\inn(u_{\sigma,\tau})\circ f_\sigma\circ f_\tau\\
& u_{\sigma,\tau\upsilon}\cdot f_\sigma(u_{\tau,\upsilon})=u_{\sigma\tau,\upsilon}\cdot u_{\sigma,\tau}\\
&f_\sigma=\psi_\sigma\circ (f_A)_\sigma\quad\text{for some } \psi_\sigma\in \Inn A
\end{align*}
for all $\sigma,\tau,\upsilon\in\G$.
The group $\Maps(\G,A)$ acts on the left on $Z^2(\G,A,\kappa_A)$ as follows.
If
\[w\in\Maps(\G,A),\ (u,f)\in Z^2(\G, A,\kappa_A),\]
then
\[w*(u,f)=(u',f'),\]
where
\begin{align*}
& u'_{\sigma,\tau}=w_{\sigma\tau}\cdot u_{\sigma,\tau}\cdot f_\sigma(w_\tau)^{-1}\cdot w_\sigma^{-1},\\
& f'_\sigma=\inn(w_\sigma)\circ f_\sigma
\end{align*}
for all $\sigma,\tau\in \G$.

\begin{definition}  $H^2(A)=Z^2(\G,A,\kappa_A)/\Maps(\G,A)$.  \end{definition}

By a {\em neutral} cocycle in $Z^2(\G,,A,\kappa_A)$ we mean a cocycle
of the form $(1_A,f)$, and by the {\em unit} cocycle we mean $(1_A, f_A)$.
A {\em neutral class} in $H^2(A)$ is the class of a neutral cocycle,
and the {\em unit class} 1 in $H^2(A)$ is the class of the unit cocycle.
We obtain a distinguished subset $H^2(A)'\subset H^2(A)$
consisting of the neutral elements, and $H^2(A)'$ contains the unit element 1.

Note that a 2-cocycle $(u,f)\in Z^2(\G,A,\kappa_A)$ defines a map  $\psi\colon \G\to\Inn A$
by $\psi_\sigma=f_\sigma\circ(f_A)_\sigma^{-1}$.
One checks immediately that $(u,\psi)\in Z^2(\G, \Inn A)$ and that the bijection
\[Z^2(\G,A,\kappa_A)\labelt\sim Z^2(\G,A\to\Inn A), \quad(u,f)\mapsto (u,\psi), \]
induces a canonical bijection $H^2(A)\labelt\sim \H^2(A\to\Inn A)$.
This bijection induces a bijection $H^2(A)'\labelt\sim \H^2(A\to\Inn A)'$ on the set of neutral elements
and takes the unit element of $H^2(A)$ to the unit element of $\H^2(A\to\Inn A)$.
We obtain a canonical surjective map
\begin{equation}\label{e:lambda}
\lambda_A\colon H^2(A)\labelt\sim \H^2(A\to\Inn A)\labelt{\vk} H^2(A\to\Inn A).
\end{equation}

\begin{theorem}[Debremaeker {\cite[Ch.\,V, Thm.~3, p.\,112]{Debremaeker-thesis}}]
\label{t:Deb}
The canonical surjection \eqref{e:lambda} is a bijection.
\end{theorem}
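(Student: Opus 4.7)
The plan. By construction $\lambda_A$ is the composition
\[H^2(A)\isoto \H^2(A\to\Inn A)\xrightarrow{\vk} H^2(A\to\Inn A),\]
where the first arrow is the bijection of cocycle sets described just above the statement. Hence the problem reduces to showing that $\vk$ is injective for the specific crossed module $(A\to\Inn A)$. Recalling that $\vk$ is the further quotient by the $G=\Inn A$-action on $\H^2(A\to\Inn A)$, this amounts to: for every $(u,\psi)\in Z^2(\G,A\to\Inn A)$ and every $g\in\Inn A$, the cocycles $g\star(u,\psi)$ and $(u,\psi)$ lie in the same $\Maps(\G,A)$-orbit.

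The crucial feature of this particular crossed module is that $\rho=\inn\colon A\to\Inn A$ is \emph{surjective}, so any $g$ can be written as $g=\inn(a)$ for some $a\in A$. I would try the explicit witness $w\in\Maps(\G,A)$ given by
\[w_\sigma\;=\;a\cdot\psi_\sigma(\hs^\sigma\! a^{-1}),\qquad\sigma\in\G\]
(equivalently, in the $(u,f)$-description with $f_\sigma=\psi_\sigma\circ(f_A)_\sigma$, one has $w_\sigma=a\cdot f_\sigma(a)^{-1}$), and then verify that $w*(u,\psi)=g\star(u,\psi)$. The verification splits into matching the $\psi$-components and the $u$-components.

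The $\psi$-equality is a direct manipulation of inner automorphisms using $\rho=\inn$, the identity $\inn(\psi_\sigma(x))=\psi_\sigma\inn(x)\psi_\sigma^{-1}$, and $\hs^\sigma\inn(x)=\inn(\hs^\sigma\! x)$; after expansion both sides become $\inn(a)\cdot\psi_\sigma\cdot\inn(\hs^\sigma\! a^{-1})$. The main obstacle is the $u$-equality, namely
\[w_{\sigma\tau}\cdot u_{\sigma,\tau}\cdot \psi_\sigma(\hs^\sigma\! w_\tau)^{-1}\cdot w_\sigma^{-1}\;=\;a\cdot u_{\sigma,\tau}\cdot a^{-1}.\]
After substituting the formula for $w$ and simplifying, this reduces to an identity of the form $f_{\sigma\tau}(a)^{-1}\cdot u_{\sigma,\tau}\cdot f_\sigma f_\tau(a)=u_{\sigma,\tau}$, which is an immediate consequence of the 2-cocycle relation $f_{\sigma\tau}=\inn(u_{\sigma,\tau})\circ f_\sigma\circ f_\tau$ (equivalently, the second cocycle condition of Section 1 applied to $\psi$). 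With both pieces verified, $\vk$ is injective on this crossed module, and $\lambda_A$ is consequently a bijection.
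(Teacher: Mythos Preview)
Your proposal is correct and is precisely the paper's second proof: reduce to showing that the $\Inn A$-action on $\H^2(A\to\Inn A)$ is trivial, write $g=\inn(a)$ using surjectivity of $\rho$, and exhibit the witness $w_\sigma=a\cdot\psi_\sigma(\hs^\sigma\! a)^{-1}$; the paper merely asserts that one can check $g\star(u,\psi)=w*(u,\psi)$, whereas you actually carry out the verification. (The paper also gives an alternative first proof via the simply transitive $H^2(Z_A)$-action on both sides, which you do not use.)
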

\begin{proof}[First proof]
Let $Z_A$ denote the center of $A$, which is is a $\G$-group.
Then the group of 2-cocycles $Z^2(\G,Z_A)$ acts on the left on the set $Z^2(\G,A,\kappa_A)$ as follows:
\[\text{if } z\in Z^2(\G,Z_A),\ (u,f)\in Z^2(\G,A,\kappa_A),\text{ then } z*(u,f)=(zu,f).\]
This action induces an action of $H^2(Z)$ on $H^2(A)$, which is simply transitive,
see \cite[IV-Thm.\,8.8]{MacLane63} or \cite[Prop.\,1.17]{Springer66}.

On the other hand, the group $Z^2(\G,Z_A)$ acts on the left on the set $Z^2(\G,A\to \Inn A)$ as follows:
\[\text{if } z\in Z^2(\G,Z_A),\ (u,\psi)\in Z^2(\G,A\to \Inn A),\text{ then } z*(u,\psi)=(zu,\psi).\]
This map induces an action of $H^2(Z_A)$ on $H^2(A\to \Inn A)$ and, by the action of $H^2(Z_A)$
on the unit element $1\in H^2(A\to \Inn A)$,
it induces a map
\[\mu\colon H^2(Z_A)\to H^2(A\to \Inn A),\]
which can be factored as
\[\mu\colon H^2(Z_A)\isoto H^2(Z_A\to 1)\labelt{\iota_*} H^2(A\to\Inn A),\]
where the map $\iota_*$ is induced by the embedding of crossed modules
\[ \iota\colon (Z\to 1)\into(A\to\Inn A).\]
Since the embedding $\iota$ is a quasi-isomorphism of crossed modules, the map $\iota_*$ is bijective
(\hs see \cite[Thm.~3.3]{Borovoi92} or \cite[Prop.~5.6]{Noohi11}\hs),
hence the map $\mu$ is bijective and the action of  $H^2(Z_A)$ on $H^2(A\to \Inn A)$ is simply transitive.

Now, since the map \eqref{e:lambda} is $H^2(Z_A)$-equivariant, we conclude that it is bijective, as required.

\noindent
{\em Second proof} (similar to \cite[Proof of Prop. 1.19]{Springer66}).
We wish to prove that the surjective map
\[\vk\colon  \H^2(A\to\Inn A)\to H^2(A\to \Inn A)\]
is bijective.
It suffices to show that $\Inn A$ acts on $\H^2(A\to\Inn A)$ trivially.

Let
\[g\in\Inn A,\ g=\inn(b),\ b\in A.\]
One can check  that the cocycles  $g*(u,\psi)$ and $(u,\psi)$ give the same class in $\H^2(A\to\Inn A)$, namely, that
\[g*(u,\psi)=w*(u,\psi),\]
where
\[ w\in\Maps(\G,A),\ w_\sigma=b\cdot\psi_\sigma(\hs^\sigma\! b)^{-1}\ \text{ for } \sigma\in\G,\]
see Appendix \ref{s:B} below.
This completes the second proof.
\end{proof}

\section{Cohomology exact sequence}
Let
\begin{equation}\label{e:ABC}
1\to A\labelt{i} B\labelt{j} C\to 1
\end{equation}
be a short exact sequence of $\G$-groups.
We construct a connecting map
\[\Delta\colon H^1(C)\to H^2(A\to\Inn B)\quad \text{(sic!).} \]

Let $c\in Z^1(\G,C)\subset\Maps(\G,C)$. We lift $c$ to some map $b\colon \G\to B$ and define
\begin{align*}
&u_{\sigma,\tau}=b_{\sigma\tau}\cdot\hs^\sigma\hlb b_\tau^{-1}\cdot b_\sigma^{-1}\in A\\
&\psi_\sigma=\inn(b_\sigma)\in\Inn B
\end{align*}
for $\sigma,\tau\in \G$.
We set $\Delta([c])=[u,\psi]$, where $[c]$ denotes the class of $c$ in $H^1(C)$
and $[u,\psi]$ denotes the class of $(u,\psi)\in Z^2(\G,A\to\Inn B)$ in $H^2(A\to\Inn B)$.
One checks check that $(u,\psi)\in Z^2(\G,A\to\Inn B)$ and that the map $\Delta$ is well defined,
see Appendix \ref{s:C} below.

Consider the morphisms of crossed modules
\[\xymatrix@C=19pt{ (A\to\Inn B)\ar[r]^{i_*} &(B\to\Inn B)\ar[r]^{j_*} &(C\to \Inn C)  }\]
and the sequence
\begin{equation}\label{e:DD}
\xymatrix@C=19pt{
H^1(B)\ar[r]^{j_*} &H^1(C)\ar[r]^-{\Delta} &H^2(A\to\Inn B)\ar[r]^{i_*} &H^2(B\to\Inn B)\ar[r]^{j_*} &H^2(C\to \Inn C)
}
\end{equation}

\begin{theorem}[Dedecker {\cite[Thm.\,2.2]{Dedecker69}} and Debremaeker {\cite[Ch.\,IV, Thm.\,2.1.7, p.\,103]{Debremaeker-thesis}}]
\label{t:DD}
For an exact sequence of $\G$-groups \eqref{e:ABC}, the sequence  \eqref{e:DD} is exact in the following sense:
\begin{enumerate}
\item[(i)] an element of $H^1(C)$ is contained in the image of $H^1(B)$
if and only if its image in $H^2(A\to\Inn B)$ is {\em neutral};
\item[(ii)] an element of $H^2(A\to \Inn B)$ is contained in the image  of $H^1(C)$
if and only if its image in $H^2(B\to\Inn B)$ is {\em the unit element};
\item[(iii)] an element of $H^2(B\to\Inn B)$ is contained in the image of $H^2(A\to\Inn B)$
if and only if its image in $H^2(C\to\Inn C)$ is {\em neutral}.
\end{enumerate}
\end{theorem}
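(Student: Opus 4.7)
The plan is to verify each of the three exactness statements directly from the explicit cocycle formulas: the definition of $\Delta$, the morphisms $i_*$ and $j_*$, and the $C^1$-action from Construction~\ref{cons:action}. Throughout I will fix $c\in Z^1(\G,C)$, pick any pointwise lift $b\colon\G\to B$ of $c$, and write $(u,\psi)=(b_{\sigma\tau}\hs^\sigma\! b_\tau^{-1}b_\sigma^{-1},\,\inn b_\sigma)$ for the representative of $\Delta([c])$.

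For (i), the forward implication is immediate: if $[c]=j_*[b']$ for some $b'\in Z^1(\G,B)$, I replace $c$ by the cohomologous cocycle $j(b')$ and use $b'$ itself as the lift; then $u=1_A$ because $b'$ is a cocycle, so $\Delta([c])$ is represented by the neutral cocycle $(1_A,\inn b')$. For the converse, I would unpack the assumption as $(u,\psi)=(w,g_0)*(1_A,\psi'')$ in $Z^2(\G,A\to\Inn B)$, observe that the $\Inn B$-component $g_0$ can be absorbed into $\psi''$ (the $\Inn B$-action carries neutral cocycles to neutral cocycles), and match the two expressions for $\psi$: this forces $\psi''_\sigma=\inn(w_\sigma^{-1}b_\sigma)$. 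Substituting into the $u$-formula should reduce to the 1-cocycle identity $b'_{\sigma\tau}=b'_\sigma\cdot\hs^\sigma\! b'_\tau$ for $b':=w^{-1}b\in\Maps(\G,B)$; since $w_\sigma\in A$, we have $j(b'_\sigma)=c_\sigma$, proving $[c]\in j_*H^1(B)$.

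For (ii), the forward direction is a one-line computation: taking $w=b$ in the $\Maps(\G,B)$-action gives $b*(1_B,1)=(i(u),\psi)$, so $i_*\Delta([c])$ is the unit class. Conversely, from $(i(u),\psi)=(w,\inn b_0)*(1_B,1)$ in $Z^2(\G,B\to\Inn B)$ I would set $b_\sigma:=w_\sigma\cdot b_0\cdot\hs^\sigma\! b_0^{-1}\in B$ and $c_\sigma:=j(b_\sigma)$; a direct computation should give $\psi_\sigma=\inn b_\sigma$ and $i(u_{\sigma,\tau})=b_{\sigma\tau}\hs^\sigma\! b_\tau^{-1}b_\sigma^{-1}$. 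Applying $j$ then yields the 1-cocycle identity for $c\in Z^1(\G,C)$, and by construction $b$ lifts $c$, so $\Delta([c])=[u,\psi]$.

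For (iii), the forward direction is immediate because $j\circ i=1$ forces $j_*i_*(u_0,\psi_0)=(1,\bar\psi_0)$, a neutral cocycle. For the converse, a witness $(w_1,g_1)\in\Maps(\G,C)\rtimes\Inn C$ gives $(j(u),\bar\psi)=(w_1,g_1)*(1,\bar\psi'')$; using the surjectivity of $j\colon B\onto C$, and hence of $\Inn B\onto\Inn C$, I lift $w_1$ pointwise to $\tilde w_1\colon\G\to B$ and $g_1=\inn c_0$ to $\tilde g_1=\inn b_0\in\Inn B$. Then $(u',\psi'):=(\tilde w_1,\tilde g_1)^{-1}*(u,\psi)$ represents the same class in $H^2(B\to\Inn B)$ but satisfies $j(u')=1$, so $u'$ takes values in $i(A)$; since $A$ is normal in $B$, each $\psi'_\sigma$ preserves $A$, so $(u',\psi')$ may legitimately be read as an element of $Z^2(\G,A\to\Inn B)$ with $i_*[u',\psi']=[u,\psi]$. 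The principal obstacle in all three parts is bookkeeping through the semi-direct product action of $C^1=\Maps(\G,\cdot)\rtimes(\cdot)$ and pinpointing when the inner-automorphism component can be harmlessly absorbed; once the key substitutions $b'_\sigma=w_\sigma^{-1}b_\sigma$ in (i) and $b_\sigma=w_\sigma b_0\hs^\sigma\! b_0^{-1}$ in (ii) are in hand, the cocycle identities fall out directly from Construction~\ref{cons:action}.
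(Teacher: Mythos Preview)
Your proof is correct and follows the same cocycle-level strategy as the paper. The one substantive difference is in the converse of (ii): the paper invokes Theorem~\ref{t:Deb} (the bijection $\H^2(B\to\Inn B)\isoto H^2(B\to\Inn B)$) to conclude that the hypothesis $[i(u),\psi]=[1,1]$ already holds at the level of \emph{thick} cohomology, so that $(i(u),\psi)=b*(1,1)$ for some $b\in\Maps(\G,B)$, and then simply sets $c=j\circ b$. You instead allow the full $C^1$-witness $(w,\inn b_0)$ and absorb the $\Inn B$-part via the substitution $b_\sigma=w_\sigma\, b_0\,{}^\sigma b_0^{-1}$; this in effect reproves the relevant special case of Theorem~\ref{t:Deb} by hand and makes your argument for (ii) self-contained. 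In (i) and (iii) you are similarly more explicit than the paper about disposing of the $\Inn B$- (resp.\ $\Inn C$-) component of the $C^1$-action before matching cocycles; the paper tacitly uses that the $G$-action carries neutral cocycles to neutral cocycles and passes directly to a $\Maps(\G,\cdot)$-only witness.
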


\begin{proof}
See Appendix \ref{s:C} below.
\end{proof}

\section{A version of Theorem \ref{t:DD}.}

We write
\[ G= (\Inn B)|_A:=\{\inn(b)|_A\ |\ b\in B\},\]
the group of restrictions to $A$ of the inner automorphisms of $B$.
 Then $G\subset \Aut A$.
We have an epimorphism $\Inn B \to G$ and a morphism of crossed modules
\[\pi\colon (A\to\Inn B)\ \to\ (A\to G).\]

\begin{lemma}
For $(u,\psi)\in Z^2(\Gamma, A\to\Inn B)$, its class $[u,\psi]\in H^2(A\to\Inn B)$ is neutral if and only if
$\pi_*([u,\psi])\in H^2(A\to G)$ is neutral.
\end{lemma}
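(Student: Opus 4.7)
My plan is to prove the two implications separately. The forward direction ($\Rightarrow$) is purely formal: $\pi$ is a morphism of crossed modules, so it induces maps $\pi_*\colon Z^2(\G, A\to\Inn B)\to Z^2(\G, A\to G)$ and $\pi_*\colon C^1(\G, A\to\Inn B)\to C^1(\G, A\to G)$ that intertwine the two actions of Construction~\ref{cons:action}. Since $\pi$ is the identity on $A$, a cocycle of the form $(1_A,\psi')$ is carried to a cocycle of the form $(1_A,\pi\circ\psi')$, so neutral classes are preserved.

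For the reverse direction, the key observation is that the $\Inn B$-action on $A$ entering Construction~\ref{cons:action} factors through the restriction $\Inn B\onto G$. Concretely, for $h=\inn(b)\in\Inn B$ and $a\in A$ one has $\hs^h\hlb a=bab^{-1}=(h|_A)(a)$, and the conjugate $h\cdot\psi_\sigma\cdot\hs^\sigma\hl h^{-1}\in\Inn B$ restricts on $A$ to $(h|_A)(\psi_\sigma|_A)(\hs^\sigma\hl (h|_A))^{-1}$, using $\G$-equivariance of the restriction map (which in turn follows from $\hs^\sigma\inn(b)=\inn(\hs^\sigma\hlb b)$). Consequently, the first (i.e.\ $A$-valued) component of $w*(h\star(u,\psi))$ depends on $h\in\Inn B$ only through $h|_A\in G$.

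Given these preparations, assume $\pi_*[u,\psi]\in H^2(A\to G)$ is neutral, witnessed by $w\in\Maps(\G,A)$ and $g\in G$ with the first-component equation
\[w_{\sigma\tau}\cdot\hs^g u_{\sigma,\tau}\cdot\bigl(g\cdot\psi_\sigma|_A\cdot(\hs^\sigma\hlb g)^{-1}\bigr)(\hs^\sigma\hlb w_\tau)^{-1}\cdot w_\sigma^{-1}=1\]
holding for all $\sigma,\tau\in\G$. Because $\Inn B\onto G$ is surjective by the very definition of $G$, I lift $g$ to some $h\in\Inn B$ and form $(u''',\psi''')=w*(h\star(u,\psi))\in Z^2(\G, A\to\Inn B)$. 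The two observations above identify $u'''_{\sigma,\tau}$ term-by-term with the left-hand side of the displayed equation, so $u'''=1_A$ and therefore $[u,\psi]=[1_A,\psi''']$ is neutral in $H^2(A\to\Inn B)$.

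The argument has no conceptual obstacle; the one thing to handle with care is the matching of the two conjugation formulas for $\psi_\sigma$ (in $\Inn B$ versus in $G$) acting on elements of $A$, which is a straightforward verification once $\G$-equivariance of the restriction $\Inn B\to G$ and the factorisation of the $\Inn B$-action on $A$ through $G$ are in hand.
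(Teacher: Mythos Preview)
Your proof is correct and is precisely the direct verification the paper has in mind: the paper's own proof reads simply ``Easy.'' Your argument spells out the two points that make it so---that $\pi$ is the identity on $A$ (so neutral cocycles map to neutral cocycles), and that the $A$-component of the $C^1$-action on $Z^2(\G,A\to\Inn B)$ depends on $\Inn B$ only through its image $G$, so a witness $(w,g)$ for neutrality downstairs lifts to a witness $(w,h)$ upstairs via the surjection $\Inn B\onto G$.
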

\begin{proof} Easy. \end{proof}

\begin{corollary}\label{c:DD}
For an exact sequence of $\G$-groups \eqref{e:ABC}, the sequence
\begin{equation}
\xymatrix@C=30pt{
H^1(B)\ar[r]^{j_*} &H^1(C)\ar[r]^-{\Delta\circ\pi_*} &H^2(A\to G)
}
\end{equation}
is exact in the following sense: a cohomology class $c\in H^1(C)$ comes from $H^1(B)$ if and only if its image in $H^2(A\to G)$ is neutral.
\end{corollary}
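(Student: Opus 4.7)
The plan is to deduce Corollary \ref{c:DD} by combining Theorem \ref{t:DD}(i) with the preceding lemma, treating the map in the statement as the composition $\pi_*\circ\Delta\colon H^1(C)\to H^2(A\to\Inn B)\to H^2(A\to G)$ (the notation $\Delta\circ\pi_*$ in the statement appears to be a misprint, since $\Delta$ lands in $H^2(A\to\Inn B)$ and $\pi_*$ is defined on $H^2(A\to\Inn B)$).

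First, I would invoke Theorem \ref{t:DD}(i): a class $[c]\in H^1(C)$ lies in the image of $j_*\colon H^1(B)\to H^1(C)$ if and only if $\Delta([c])\in H^2(A\to\Inn B)$ is neutral. This reduces the corollary to showing that $\Delta([c])$ is neutral in $H^2(A\to\Inn B)$ precisely when $\pi_*(\Delta([c]))$ is neutral in $H^2(A\to G)$. That is exactly the content of the lemma immediately preceding the corollary, so assembling the two equivalences yields the stated exactness.

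The only thing one might still wish to verify is that the lemma does exactly what is needed on the nose. In one direction it is essentially trivial: if $(u,\psi)=(1,\psi)$ is already neutral in $Z^2(\Gamma,A\to\Inn B)$, then applying the functorial map $\pi$ gives a cocycle of the form $(1,\pi\circ\psi)$, which is neutral by definition. The converse direction, which I would treat as the only substantive point, uses that $\pi$ is the identity on the $A$-component of cocycles: if $a*(u,\psi)$ has the form $(1,\psi')$ after pushing forward through $\pi_*$, one can rewrite the cocycle condition $a_{\sigma\tau}\cdot u_{\sigma,\tau}\cdot\psi_\sigma(\hs^\sigma a_\tau)^{-1}\cdot a_\sigma^{-1}=1$ in $A$ (not merely modulo the kernel of $\Inn B\to G$), since every term on the left already lies in $A$. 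Hence the same coboundary witnesses neutrality upstairs in $H^2(A\to\Inn B)$.

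I do not expect any genuine obstacle: the corollary is a formal consequence of Theorem \ref{t:DD}(i) and the lemma, and the only thing to watch is the direction of the composition of $\Delta$ and $\pi_*$.
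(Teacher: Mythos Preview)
Your proposal is correct and follows exactly the approach implicit in the paper: the corollary is an immediate consequence of Theorem~\ref{t:DD}(i) together with the preceding lemma, and you have correctly noted that the map should be read as $\pi_*\circ\Delta$. Your sketch of the lemma's nontrivial direction is also right in spirit---the key point being that $\pi$ is the identity on $A$ and that the action of $\Inn B$ on $A$ factors through $G$---though in full detail one should also lift the $G$-component of the coboundary along the surjection $\Inn B\twoheadrightarrow G$.
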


\section{Example}
We compute the map $\Delta\circ \pi_*\colon H^1(C)\to H^2(A\to G)$ in the case when $A$ is {\em abelian}.
Here $G=(\Inn B)|_A$.

Let $(u,\psi)\in Z^2(\G, A\to G)$, then
\begin{align*}
& u_{\sigma,\tau\upsilon}\cdot \psi_\sigma(\hs^\sigma\! u_{\tau,\upsilon})= u_{\sigma\tau,\upsilon}\cdot u_{\sigma,\tau}\\
&\psi_{\sigma\tau}=\inn(u_{\sigma,\tau})\cdot\psi_\sigma\cdot\hs^\sigma\! \psi_\tau\,.
\end{align*}
Since $A$ is abelian, the homomorphism $A\to G$ is trivial, hence $\psi$ is a 1-cocycle, $\psi\in Z^1(\G,G)$.
Moreover, $u$ is a 2-cocycle, $u\in Z^2(\G,\hs_\psi A)$.
One checks immediately that the map
\[  Z^2(\G, A\to G)\to Z^1(\G,G),\quad (u,\psi)\mapsto \psi \]
induces a map
\[\zeta\colon H^2( A\to G)\to H^1(G),\quad [u,\psi)]\mapsto [\psi]. \]
Moreover, for given $\psi\in Z^1(\G,G)$, we have a bijection
\[ \lambda_\psi\colon H^2(\hs_\psi A)\isoto \zeta^{-1}([\psi]) ,\quad  [u] \mapsto[u,\psi],\]
and $\lambda_\psi([u])$ is neutral in $H^2(A\to G)$ if and only if $[u]=0\in H^2(\hs_\psi A)$.

Since $A$ is abelian, $C$ acts on $A$, and we obtain a surjective homomorphism $p\colon C\to G$.
Let $c\in Z^1(\G,C)$, $\psi=p_*(c)\in Z^1(\G,G)$, then we write $_c A$ for $_\psi A$.

Let us lift $c\colon \G\to C$ to some map $b\colon \G\to B$ and set
\[ u_{\sigma,\tau}= b_{\sigma\tau}\cdot\hs^\sigma\hlb b_\tau^{-1}\cdot b_\sigma^{-1}\,, \]
then $u\in Z^2(\G,\hs_c A)$.
Set
\[ \Delta_S(c)=[u]\in H^2(\hs_c A).\]
A simple computation shows that the image of $[c]$ in $H^2(A\to G)$ is
\[ \lambda_\psi( \Delta_S(c))\in \zeta^{-1}([\psi])\subset H^2(A\to G), \]
where $\psi=p_*(c)\in Z^1(\G, G)$, \ $\Delta_S(c)\in H^2(\hs_c A)$.
This image is a neutral class in $H^2(A\to G)$ if and only if $\Delta_S(c)=0$.

Applying Corollary \ref{c:DD},
we recover a result of Serre \cite[I.5.6, Prop.\,41]{Serre}: a cohomology class  $[c]\in H^1(C)$ comes from $H^1(B)$ if and only if
\[\Delta_S(c)=0\in H^2(\hs_c A).\]

\appendix

\section{Checks in Construction \ref{cons:action}}
\label{s:A}

We define a left action of the group $\Maps(\G,A)$ on $Z^2(\G,A\to G)$ as follows.
If
\[w\in\Maps(\G,A),\ (u,\psi)\in Z^2(\G, A\to G),\]
 then we set
\[w*(u,\psi)=(u',\psi'),\]
where
\begin{align*}
& u'_{\sigma,\tau}=w_{\sigma\tau}\cdot u_{\sigma,\tau}\cdot\psi_\sigma(\hs^\sigma\!w_\tau)^{-1}\cdot w_\sigma^{-1},\\
&\psi'_\sigma=\rho(w_\sigma)\cdot\psi_\sigma
\end{align*}
for all $\sigma,\tau\in\G$.

We check that $(u',\psi')\in Z^2(\G, A\to G)$. We have
\begin{align*}
&u'_{\sig,\tau\ups}\cdot\psi'_\sig(\upsig u'_{\tau,\ups})=\\
& = w_{\sig\tau\ups}\cdot u_{\sig,\tau\ups}\cdot \psi_\sig(\upsig w_{\tau\ups})^{-1}\cdot w_\sig^{-1}\cdot
w_\sig\cdot\psi_\sig(\upsig(w_{\tau\ups}\cdot u_{\tau,\ups}\cdot \psi_\tau(\uptau u_\ups)^{-1}\cdot w_\tau^{-1}))\cdot w_\sig^{-1}\\
& =  w_{\sig\tau\ups}\cdot u_{\sig,\tau\ups}\cdot
\psi_\sig(\upsig u_{\tau,\ups})^{-1}\cdot\psi_\sig(\upsig(\psi_\tau(\uptau w_\ups)))^{-1}\cdot
\psi_\sig(\upsig w_\tau)^{-1}\cdot w_\sig^{-1}\,,
\end{align*}
\begin{align*}
 u'_{\sig\tau,\ups}\cdot u'_{\sig,\tau}
& = w_{\sig\tau\ups}\cdot u_{\sigma\tau,\ups}\cdot \psi_{\sigma\tau}(\hs^{\sigma\tau}\! w_\ups)^{-1}\cdot w_{\sigma\tau}^{-1}\cdot
w_{\sig\tau}\cdot u_{\sig,\tau}\cdot\psi_\sig(\upsig w_\tau)^{-1}\cdot w_\sig^{-1}\\
& =  w_{\sig\tau\ups}\cdot u_{\sigma\tau,\ups}\cdot u_{\sig,\tau}\cdot
\psi_\sig(\upsig(\psi_\tau(\uptau w_\ups)))^{-1}\cdot \psi_\sig(\upsig w_\tau)^{-1}\cdot w_\sig^{-1}\\
& = w_{\sig\tau\ups}\cdot u_{\sig,\tau\ups}\cdot \psi_\sig(\upsig u_{\tau,\ups})^{-1}
\cdot\psi_\sig(\upsig(\psi_\tau(\uptau w_\ups)))^{-1}\cdot
\psi_\sig(\upsig w_\tau)^{-1}\cdot w_\sig^{-1}\\
& =u'_{\sig,\tau\ups}\cdot\psi'_\sig(\upsig u'_{\tau,\ups}),
\end{align*}
as required.
We have
\begin{align*}
\rho(u'_{\sig,\tau})\cdot\psi'_\sig\cdot\upsig\psi'_\tau
& = \rho(w_{\sig\tau}\cdot u_{\sig,\tau}\cdot\psi_\sig(\upsig w_\tau)^{-1}\cdot w_\sig^{-1})\cdot
\rho(w_\sig)\cdot\psi_\sig\cdot\upsig(\rho(w_\tau)\psi_\tau)\\
& =\rho( w_{\sig\tau}\, u_{\sig,\tau})\cdot\rho(\psi_\sig(\upsig w_\tau))^{-1}
\cdot\psi_\sig\cdot\rho(\upsig w_\tau)\cdot\upsig\psi_\tau\\
& =\rho(w_{\sig\tau})\cdot\rho(u_{\sig,\tau})\cdot\psi_\sig\cdot\upsig\psi_\tau=\rho(w_{\sig\tau})\cdot\psi_{\sig\tau}=\psi'_{\sig\tau}\,,
\end{align*}
as required.

We define a left  action of $G$ on $Z^2(\G, A\to G)$ as follows.
If
\[g\in G,\ (u,\psi)\in Z^2(\G,A\to G),\]
 then we set
\[g\star(u,\psi)=(u'',\psi''),\]
where
\begin{align*}
&u''_{\sigma,\tau}=\hs^g \hlb u_{\sigma,\tau}\\
&\psi''_\sigma=g\cdot\psi_\sigma\cdot\hs^\sigma\!\hl g^{-1}
\end{align*}
for all $\sigma,\tau\in G$.

We check that   $(u'',\psi'')\in Z^2(\G, A\to G)$. We have
\begin{align*}
u''_{\sigma,\tau\ups}\cdot\psi''_\sigma(\upsig u''_{\tau,\ups})
&=\upg u_{\sigma,\tau\ups}\cdot(g\hs\psi_\sigma\hs\upsig g^{-1})(\upsig(\upg u_{\tau,\ups})) =\upg u_{\sigma,\tau\ups}\cdot\upg\psi_\sigma(\upsig u_{\tau,\ups})\\
&=\upg(u_{\sigma,\tau\ups}\cdot\psi_\sigma(\upsig u_{\tau,\ups}))=\upg(u_{\sigma\tau,\ups}\cdot u_{\sigma,\tau})\\
&=\upg u_{\sigma\tau,\ups}\cdot\upg u_{\sigma,\tau}=u''_{\sigma\tau,\ups}\cdot u''_{\sigma,\tau}\,,
\end{align*}
\begin{align*}
\rho(u''_{\sigma,\tau})\cdot\psi''_\sigma\cdot\upsig\psi''_\tau
&= \rho(\upg u_{\sigma,\tau})\cdot g\hs\psi_\sigma\hs\upsig g^{-1}\cdot\upsig(g\hs\psi_\tau\hs\uptau g^{-1})\\
&=g\hs\rho(u_{\sigma,\tau})\hs g^{-1}\cdot g\hs \psi_\sigma\hs\upsig g^{-1}\cdot \upsig g \hs\upsig\psi_\tau\hs\hs^{\sigma\tau}\! g^{-1}\\
&=g\cdot\rho(u_{\sigma,\tau})\cdot\psi_\sigma\cdot\upsig\psi_\tau\cdot
\hs^{\sigma\tau}\!g^{-1}=g\cdot\psi_{\sigma\tau}\cdot\hs^{\sigma\tau}\!g^{-1}=\psi''_{\sigma\tau}\,,
\end{align*}
as required.

\section{Checks in the proof of Theorem \ref{t:Deb}}
\label{s:B}

Let
\[g\in\Inn A,\ g=\inn(b),\ b\in A.\]
We check  that the cocycles  $g*(u,\psi)$ and $(u,\psi)$ give the same class in $\H^2(A\to\Inn A)$, namely, that
\[g*(u,\psi)=w*(u,\psi),\]
where
\[ w\in\Maps(\G,A),\ w_\sigma=b\cdot\psi_\sigma(\hs^\sigma\! b)^{-1}\ \text{ for } \sigma\in\G.\]

Indeed, write
\[  g*(u,\psi)=(u'',\psi''),\quad w*(u,\psi)=(u',\psi'),\]
then
\begin{align*}
&u''_{\sigma,\tau} = b\hs u_{\sigma,\tau}\hs b^{-1}, \quad
                                         \psi''_\sigma=\inn(b)\circ\psi_\sigma\circ\inn(\hs^\sigma\! b)^{-1},\\
&u'_{\sigma,\tau}= w_{\sigma\tau}\cdot u_{\sigma,\tau}\cdot\psi_\sigma(\hs^\sigma\!w_\tau)^{-1}\cdot w_\sigma^{-1}\,,
                                                        \quad\psi'_\sigma=\inn(w_\sigma)\circ\psi_\sigma\,.
\end{align*}
We obtain
\[\psi''_\sigma(a)=b\hs\psi_\sigma(\hs^\sigma\!b^{-1} a\hs^\sigma\!b)\hs b^{-1}=
b\hs\psi_\sigma(\hs^\sigma\!b)^{-1}\psi_\sigma(a)\psi_\sigma(\hs^\sigma\!b)\hs b^{-1}=w_\sigma\hs\psi_\sigma(a)\hs w_\sigma^{-1}\,,\]
and so
\[\psi''_\sigma=\inn(w_\sigma)\circ\psi_\sigma=\psi'_\sigma\,,\]
as required.

We have
\begin{align*}
 u'_{\sigma,\tau}= &w_{\sigma\tau}\cdot u_{\sigma,\tau}\cdot\psi_\sigma(\hs^\sigma\!w_\tau)^{-1}\cdot w_\sigma^{-1}\\
=&b\cdot\psi_{\sigma\tau}(\hs^{\sigma\tau}\!b)^{-1}\cdot u_{\sigma,\tau}\cdot
\psi_\sigma(\hs^\sigma\! b\, ^\sigma\!\psi_\tau(\hs^\tau\!b)^{-1})^{-1}\cdot\psi_\sigma(\hs^\sigma\!b)\cdot b^{-1}.
\end{align*}
Since $(u,\psi)\in Z^2(\G,A\to \Inn A)$, we have
\[\psi_{\sigma\tau}(\hs^{\sigma\tau}\!b)=u_{\sigma,\tau}\cdot\psi_\sigma(\hs^\sigma\!\psi_\tau(\hs^\tau\!b))\cdot u_{\sigma,\tau}^{-1}\,.\]
We obtain that
\begin{align*}
u'_{\sigma,\tau}
&=b\cdot u_{\sigma,\tau}\cdot\psi_\sigma(\upsig \psi_\tau(\uptau b))^{-1}\cdot u_{\sigma,\tau}^{-1}\cdot u_{\sigma,\tau}\cdot
\psi_\sigma(\upsig \psi_\tau(\uptau b))\cdot\psi_\sigma(\upsig b)^{-1}\cdot\psi_\sigma(\upsig b)\cdot b^{-1}\\
&=b\cdot u_{\sigma,\tau}\cdot b^{-1}=u''_{\sigma,\tau}\,.
\end{align*}
Thus $u'_{\sigma,\tau}=u''_{\sigma,\tau}$ as required, which completes the proof.

\section{Proof of Theorem \ref{t:DD}}
\label{s:C}

Let
\begin{equation}\label{e:ABC-bis}
1\to A\labelt{i} B\labelt{j} C\to 1
\end{equation}
be a short exact sequence of $\G$-groups.
We construct a connecting map
\[\Delta\colon H^1(C)\to H^2(A\to\Inn B)\quad \text{(sic!).} \]

Let $c\in Z^1(\G,C)\subset\Maps(\G,C)$. We lift $c$ to some map $b\colon \G\to B$ and define
\begin{align*}
&u_{\sigma,\tau}=b_{\sigma\tau}\cdot\hs^\sigma\hlb b_\tau^{-1}\cdot b_\sigma^{-1}\in A\\
&\psi_\sigma=\inn(b_\sigma)\in\Inn B
\end{align*}
for $\sigma,\tau\in \G$.
We set $\Delta([c])=[u,\psi]$, where $[c]$ denotes the class of $c$ in $H^1(C)$
and $[u,\psi]$ denotes the class of $(u,\psi)\in Z^2(\G,A\to\Inn B)$ in $H^2(A\to\Inn B)$.

We check that $(u,\psi)\in Z^2(\G,A\to\Inn B)$.
We have
\begin{align*}
 u_{\sigma,\tau\ups}\cdot\psi_\sigma(\hs^\sigma\hl u_{\tau,\ups})&=
b_{\sigma\tau\ups}\cdot \hs^\sigma\hlb b_{\tau\ups}^{-1}\cdot b_\sigma^{-1}\cdot b_\sigma\cdot
\hs^\sigma\hlb ( b_{\tau\ups}\cdot\hs^\tau\hlb b_\ups^{-1}\cdot b_\tau^{-1})\cdot b_\sigma^{-1}\\
&=b_{\sigma\tau\ups}\cdot\hs^{\sigma\tau}\hlb b_\ups^{-1}\cdot\hs^\sigma\hlb b_\tau^{-1}\cdot b_\sigma^{-1}\\
&=(b_{\sigma\tau\ups}\cdot\hs^{\sigma\tau}\hlb b_\ups^{-1}\cdot b_{\sigma\tau}^{-1})\cdot
(b_{\sigma\tau}\cdot\hs^\sigma\hlb b_\tau^{-1}\cdot b_\sigma^{-1})\\
&=u_{\sigma\tau,\ups}\cdot u_{\sigma,\tau}\,.
\end{align*}
We have
\begin{align*}
\rho(u_{\sigma,\tau})\cdot\psi_\sigma\cdot\hs^\sigma\hl\psi_\tau
&= \inn(b_{\sigma\tau}\cdot\hs^\sigma\hlb b_\tau^{-1}\cdot b_\sigma^{-1})\cdot\inn(b_\sigma)\cdot\hs^\sigma\inn(b_\tau)=\inn(b_{\sigma\tau})=\psi_{\sigma\tau}\,.
\end{align*}
Thus indeed  $(u,\psi)\in Z^2(\G,A\to\Inn B)$, and therefore $[u,\psi] \in H^2(A\to\Inn B)$ makes sense.

We check that the map $\Delta$ is well defined.
First, let $b'=ab$ with $b'_\sigma=a_\sigma\hs b_\sigma$  be another lift of $c$ (here $a_\sigma\in A$).
Then
\begin{align*}
u'_{\sigma,\tau}&=a_{\sigma\tau}\cdot b_{\sigma\tau}\cdot\hs^\sigma\hlb b_\tau^{-1}\cdot
\hs^\sigma\hl a_\tau^{-1}\cdot b_\sigma^{-1}\cdot a_\sigma^{-1}\\
&=a_{\sigma\tau}\cdot b_{\sigma\tau}\cdot\hs^\sigma\hlb b_\tau^{-1}\cdot b_\sigma^{-1}\cdot
                                    b_\sigma\cdot\hs^\sigma\hl a_\tau^{-1}\cdot b_\sigma^{-1}\cdot a_\sigma^{-1}\\
&=a_{\sigma\tau}\cdot u_{\sigma\tau}\cdot\psi_\sigma(\hs^\sigma\hl a_\tau)^{-1}\cdot a_\sigma^{-1}\,,
\end{align*}
Further,
\[\psi'_\sigma=\inn(a_\sigma\, b_\sigma)=\inn(a_\sigma)\cdot \psi_\sigma.\]
We see that $(u',\psi')=a*(u,\psi)\sim(u,\psi)$.

Now let us take another cocycle $c''\sim c$, i.e.,
\[c''_\sigma=\gamma\cdot c_\sigma\cdot\hs^\sigma\hl \gamma^{-1}\]
 for some $\gamma\in C$.
We lift $\gamma$ to some $g\in B$, and we lift $c''$ to $b''\colon\G\to B$ with
\[b''_\sigma=g\cdot b_\sigma\cdot\hs^\sigma \hl g^{-1}.\]
Then
\begin{align*}
u''_{\sigma,\tau}&=g\cdot b_{\sigma\tau}\cdot\hs^{\sigma\tau}\hl g^{-1}\cdot
\hs^\sigma\hlb(g\cdot b_\tau\cdot\hs^\tau\hl g^{-1})^{-1}\cdot
                      ( g\cdot b_\sigma\cdot\hs^\sigma\hl g^{-1} )^{-1}\\
& = g\cdot u_{\sigma,\tau} \cdot g^{-1}=\hs^{\inn(g)} u_{\sigma,\tau}\,.
\end{align*}
Further,
\[\psi''_\sigma=\inn(g\cdot b_\sigma\cdot\hs^\sigma\hl g^{-1})=
\inn(g)\cdot\psi_\sigma\cdot\hs^\sigma\hlb\inn(g)^{-1}\]
and  $(u'',\psi'')=\inn(g)\star(u,\psi)\sim(u,\psi)$. Thus  the map $\Delta$ is indeed well defined.

Consider the morphisms of crossed modules
\[\xymatrix@C=19pt{ (A\to\Inn B)\ar[r]^{i_*} &(B\to\Inn B)\ar[r]^{j_*} &(C\to \Inn C)  }\]
and the sequence
\begin{equation}\label{e:DD-bis}
\xymatrix@C=19pt{
H^1(B)\ar[r]^{j_*} &H^1(C)\ar[r]^-{\Delta} &H^2(A\to\Inn B)\ar[r]^{i_*} &H^2(B\to\Inn B)\ar[r]^{j_*} &H^2(C\to \Inn C)
}
\end{equation}

\begin{theorem}[Dedecker {\cite[Thm.\,2.2]{Dedecker69}} and Debremaeker {\cite[Ch.\,IV, Thm.\,2.1.7, p.\,103]{Debremaeker-thesis}}]
\label{t:DD-bis}
For an exact sequence of $\G$-groups \eqref{e:ABC-bis}, the sequence  \eqref{e:DD-bis} is exact in the following sense:
\begin{enumerate}
\item[(i)] an element of $H^1(C)$ is contained in the image of $H^1(B)$
if and only if its image in $H^2(A\to\Inn B)$ is {\em neutral};
\item[(ii)] an element of $H^2(A\to \Inn B)$ is contained in the image  of $H^1(C)$
if and only if its image in $H^2(B\to\Inn B)$ is {\em the unit element};
\item[(iii)] an element of $H^2(B\to\Inn B)$ is contained in the image of $H^2(A\to\Inn B)$
if and only if its image in $H^2(C\to\Inn C)$ is {\em neutral}.
\end{enumerate}
\end{theorem}

\begin{proof}
Let $b\in Z^1(\G,B)$. We show that $\Delta\circ j_*$ takes $[b]$ to a neutral class.
Indeed, by the definition of $\Delta$, the composite map $\Delta\circ j_*$ maps $[b]$ to
$[u^A,\psi^A]$ where
\[   u^A_{\sigma,\tau}=b_{\sigma\tau}\cdot\hs^\sigma\hlb b_\tau^{-1}\cdot b_\sigma^{-1}=1  \]
because $b$ is a cocycle. Thus $[u^A,\psi^A]=[1,\psi^A]$ is a neutral class.

Conversely, let $c\in Z^1(\G,C)$ and assume that $\Delta$ takes $[c]$ to a neutral class.
Let us lift $c$ to some map $b\colon \G\to B$.
Then $\Delta[c]=[u^A,\psi^A]$, where
\begin{align*}
& u^A_{\sigma,\tau}=b_{\sigma\tau}\cdot\hs^\sigma \hlb b_\tau^{-1}\cdot b_\sigma^{-1},\\
&\psi^A_\sigma=\inn(b_\sigma).
\end{align*}
By assumption $[u^A,\psi^A]$ is a neutral class in $H^2(A\to\Inn B)$, i.e., there exists a map $a\colon \G\to A$ such that
\[ a*(u^A,\psi^A)=(1,\psi^{\prime A}).\]
This means that
\[ a_{\sigma\tau}\cdot b_{\sigma\tau}\cdot\hs^\sigma \hlb b_\tau^{-1}\cdot b_\sigma^{-1}
\cdot b_\sigma\cdot \hs^\sigma\hl a_\tau^{-1}\cdot b_\sigma^{-1}\cdot a_\sigma^{-1}=1.\]
that is,
\[ a_{\sigma\tau}\, b_{\sigma\tau}\cdot\hs^\sigma\hlb(a_\tau\, b_\tau)^{-1}\cdot (a_\sigma\, b_\sigma)^{-1}=1.\]
Set $b'_\sigma=a_\sigma\, b_\sigma$\,, then $b'_{\sigma\tau}=b'_\sigma\cdot\hs^\sigma\hlb b'_\tau$\,, hence $b'$ is a cocycle.
Clearly $j_*$ takes $[b']$ to $[c]$, and therefore, $[c]\in\im j_*$\,, as required.

Let $c\in Z^1(\G,C)$. We show that $i_*\circ\Delta$ takes $[c]$ to 1.
Indeed, let us lift $c$ to some map $b\colon\G\to B$.
Then the composite map  $i_*\circ\Delta$ takes $[c]$ to the class $[u^B,\psi^B]$
where
\begin{align*}
&u^B_{\sigma,\tau}=b_{\sigma\tau}\cdot\hs^\sigma\hlb b_\tau^{-1}\cdot b_\sigma^{-1}\in B\\
&\psi^B_\sigma=\inn(b_\sigma)\in\Inn B,
\end{align*}
and clearly $(u^B,\psi^B)=b*(1,1)$, hence $[u^B,\psi^B]=[1,1]$.

Conversely, let $[u^A,\psi^A]\in H^2(A\to\Inn B)$ and assume that $i_*([u^A,\psi^A])=[1,1]$.
Clearly $i_*([u^A,\psi^A])=[u^A,\psi^A]$, so we obtain that
\[ [u^A,\psi^A]=[1,1]\in H^2(B\to\Inn B). \]
By Theorem \ref{t:Deb} we have $H^2(B\to \Inn B)=\H^2(B\to \Inn B)$, hence $(u^A,\psi^A)=b*(1,1)$ for some $b\colon\G\to B$.
We have
\begin{align*}
&u^A_{\sigma,\tau}=b_{\sigma\tau}\cdot\hs^\sigma\hlb b_\tau^{-1}\cdot b_\sigma^{-1}\,,\\
&\psi^A_\sigma=\inn(b_\sigma).
\end{align*}
Set $c=j\circ b\colon \G\to C$.
Since $u^A_{\sigma,\tau}\in A$, we see that
\[c_{\sigma\tau}\cdot\hs^\sigma\hl c_\tau^{-1}\cdot c_\sigma^{-1}=1, \]
hence $c$ is a cocycle.
Clearly, $[u^A,\psi^A]=\Delta([c])$. Thus $[u^A,\psi^A]\in\im\Delta$, as required.

Let $(u^A,\psi^A)\in Z^2(\G, A\to\Inn B)$.
We show that $j_*\circ i_*$ takes $[u^A,\psi^A]$ to a neutral class.
Indeed, for any $\sigma,\tau\in\G$ we have $u^A_{\sigma,\tau}\in A$.
It follows that the image of $[u^A,\psi^A]$ under the composite map $j_*\circ i_*$
is of the form $[1,\psi^C]$, and hence is neutral.

Conversely, assume that $j_*$ takes $[u^B,\psi^B]$ to a neutral class $[u^C,\psi^C]$.
This means that there exists a map $c\colon \G\to C$ such that $c*(u^C,\psi^C)=(1,\psi^{\prime C})$.
Let us lift $c$ to a map $b\colon \G\to B$ and set $(u^{\prime B},\psi^{\prime B})=b*(u^B,\psi^B)$.
Then for any $\sigma,\tau\in G$ we have $u^{\prime B}_{\sigma,\tau}\in A$.
We see that  $[u^B,\psi^B]=[u^{\prime B},\psi^{\prime B}]$ lies in the image of $i_*$\,, as required.
This completes the proof of the theorem.
\end{proof}


\begin{thebibliography}{Deb77e}


\bibitem[Bor92]{Borovoi92}
M. Borovoi, Non-abelian hypercohomology of a group with coefficients in a crossed module, and Galois cohomology. Preprint, 20 pp., 1992,
\url{http://www.math.tau.ac.il/~borovoi/papers/nonab.pdf}.

\bibitem[Bor93]{Borovoi93}
M. Borovoi,
Abelianization of the second nonabelian Galois cohomology, Duke Math. J. 72 (1993),  217–-239.

\bibitem[Bor98]{Borovoi98}
M. Borovoi,
Abelian Galois cohomology of reductive groups, Mem. Amer. Math. Soc. 132 (1998), no. 626.

\bibitem[Br90]{Breen90}
L.  Breen,
Bitorseurs et cohomologie non ab\'elienne,
The Grothendieck Festschrift, Vol. I, 401–-476, Progr. Math., 86, Birkh\"auser Boston, Boston, MA, 1990.

\bibitem[Deb76]{Debremaeker-thesis}
R. Debremaeker,
Cohomologie met waarden in een gekruiste groepenschoof op een situs, Thesis, Katolieke Universiteit Leuven, 1976.

\bibitem[Deb77e]{Debremaeker77e}
R. Debremaeker,
Non abelian cohomology,  Bull. Soc. Math. Belg.  29 (1977), no. 1, 57--72.

\bibitem[Deb77f]{Debremaeker77}
R. Debremaeker,
Cohomologie \`a valeurs dans un faisceau de groupes crois\'es sur un site, I, II,
Acad. Roy. Belg. Bull. Cl. Sci. (5)  63 (1977), 758--764; ibid. (5) 63 (1977), 765--772.

\bibitem[Ded64]{Dedecker64}
P. Dedecker,  Les foncteurs $Ext_\Pi$, $\mathbf{H}_\Pi^2$ et $H_\Pi^2$ non ab\'eliens, C. R. Acad. Sci. Paris 258 (1964) 4891--4894.

\bibitem[Ded69]{Dedecker69}
P. Dedecker,
Three dimensional non-abelian cohomology for groups, 1969, Category Theory, Homology Theory and their Applications, II
(Battelle Institute Conference, Seattle, Wash., 1968, Vol. Two) pp. 32--64, Springer, Berlin.

\bibitem[Dun16]{Duncan16}
A. Duncan,
Twisted forms of toric varieties,
Transform. Groups  21 (2016), 763--802.

\bibitem[FSS98]{FSS98}
Y.\,Z. Flicker, C. Scheiderer, and R. Sujatha,
Grothendieck's theorem on non-abelian $H^2$ and local-global principles,
J. Amer. Math. Soc. 11 (1998), 731--750.

\bibitem[Flo04]{Florence04}
M. Florence,
Z\'ero-cycles de degr\'e un sur les espaces homog\`enes,
Int. Math. Res. Not. 2004, no. 54, 2897--2914.


\bibitem[Gir71]{Giraud71}
J. Giraud,  Cohomologie non ab\'elienne,  Die Grundlehren der mathematischen Wissenschaften,
Band 179, Springer-Verlag, Berlin-New York, 1971.


\bibitem[LA15]{LA15}
G. Lucchini Arteche,
Extensions of algebraic groups with finite quotient and non-abelian 2-cohomology, J. Algebra 492 (2017), 102–129.


\bibitem[ML63]{MacLane63}
S. Mac Lane, { Homology,}
Grundlehren der mathematischen Wissenschaften, Bd. 114, Academic Press,  New York;
Springer-Verlag, Berlin-G\"ottingen-Heidelberg, 1963.

\bibitem[Noo11]{Noohi11}
B. Noohi,
Group cohomology with coefficients in a crossed module, J. Inst. Math. Jussieu 10 (2011),  359--404.

\bibitem[Se94]{Serre}
J.-P. Serre, { Cohomologie galoisienne}, LNM 5, 5-\`eme ed.,
Springer-Verlag, Berlin, 1994.

\bibitem[Spr66]{Springer66}
T.\,A. Springer,  Nonabelian $H^2$ in Galois cohomology, 1966, Algebraic Groups and Discontinuous Subgroups
(Proc. Sympos. Pure Math., Boulder, Colo., 1965) pp. 164--182, Amer. Math. Soc., Providence, R.I.

\end{thebibliography}
\end{document}